\documentclass[12pt]{amsart}
\usepackage[utf8]{inputenc}
\usepackage[T2A]{fontenc}
\usepackage[english]{babel}
\usepackage{fullpage}
\usepackage{amsmath,amsthm}
\usepackage{amssymb,latexsym}
\usepackage{enumerate}
\usepackage[all,cmtip]{xy}

\markboth{A. Anisimov}{On existence of double coset varieties}

\newtheorem{thm}{Theorem}[section]

\newtheorem{lem}[thm]{Lemma}

\newtheorem{stmt}{Proposition}

\theoremstyle{definition}

\newtheorem{exa}[thm]{Example}

\newtheorem*{xrem}{Remark}
\newtheorem*{question}{Question}
\numberwithin{equation}{section}

\DeclareMathOperator{\GL}{GL}

\DeclareMathOperator{\SO}{SO}

\DeclareMathOperator{\GroupB}{B}
\DeclareMathOperator{\GroupG}{G}
\DeclareMathOperator{\GroupF}{F}
\DeclareMathOperator{\GroupH}{H}
\DeclareMathOperator{\GroupT}{T}
\DeclareMathOperator{\GroupU}{U}
\DeclareMathOperator{\GroupR}{R}
\DeclareMathOperator{\GroupS}{S}

\DeclareMathOperator{\Mat}{Mat}

\DeclareMathOperator{\Spec}{Spec}
\DeclareMathOperator{\pr}{pr}

\def \isom {\cong}
\def \AA {\mathbb{A}}

\def \PP {\mathbb{P}}

\def \CC {\mathbb{C}}
\def \kk {\CC}

\def \catquot {/\hspace{-3pt}/}
\def \dcosets #1#2#3 {#1 \hspace{-1pt} \backslash\hspace{-3pt}\backslash\hspace{-0.8pt}{#2}\hspace{-1pt}\slash\hspace{-3pt}\slash #3 \hspace{1pt}}
\def \predcosets #1#2#3 {#1 \backslash{#2}\slash #3 \hspace{1pt}}

\begin{document}

\renewcommand{\thefootnote}{}

\footnote{2010 \emph{Mathematics Subject Classification}: Primary 14L30; Secondary 14M17, 14R20.}

\footnote{\emph{Key words and phrases}: Algebraic group, double coset variety, categorical quotient.}

\renewcommand{\thefootnote}{\arabic{footnote}}
\setcounter{footnote}{0}

\title{On existence of double coset varieties}

\author{Artem Anisimov}
\address{Department of Higher Algebra, Faculty of Mechanics and Mathematics, Lomonosov Moscow State University Leninskie Gory 1, GSP-1, Moscow 119991, Russia}
\email{aanisimov@inbox.ru}

\maketitle

\begin{abstract}
Let~$\GroupG$ be a complex affine algebraic group and~$\GroupH, \GroupF \subset \GroupG$ be closed subgroups. The homogeneous space~$\GroupG / \GroupH$ can be equipped with structure of a smooth quasiprojective variety. The situation is different for double coset varieties $\dcosets{\GroupF}{\GroupG}{\GroupH}$. In this paper we give examples showing that the variety~$\dcosets{\GroupF}{\GroupG}{\GroupH}$ does not necessarily exist. We also address the question of existence of $\dcosets{\GroupF}{\GroupG}{\GroupH}$ in the category of constructible spaces and show that under sufficiently general assumptions $\dcosets{\GroupF}{\GroupG}{\GroupH}$ does exist as a constructible space.
\end{abstract}

\section{Introduction}
Let~$\GroupG$ be a complex affine algebraic group and~$\GroupH \subseteq \GroupG$ be a closed subgroup. By Chevalley Theorem the set of left~$\GroupH$-cosets can be equipped with a uniquely defined structure of a smooth quasiprojective variety such that~$\GroupG$ act morphically on~$\GroupG / \GroupH$. Moreover, the projection~$\GroupG \rightarrow \GroupG / \GroupH$ is a geometric quotient for the action of~$\GroupH$ on~$\GroupG$ by right multiplication.

The construction of the homogeneous space~$\GroupG / \GroupH$ has a natural generalisation: one can take another subgroup~$\GroupF \subset \GroupG$ and consider double cosets, i.~e. the sets~$\GroupF g \GroupH$, $g \in \GroupG$. These cosets are orbits of the action of~$\GroupF \times \GroupH$ on~$\GroupG$ given by the formula~$(f,h) \circ g = fgh^{-1}$. It is clear that such action, unlike the action of~$\GroupH$ on~$\GroupG$ by multiplication, can have orbits of different dimensions, thus it does not necessarily admit a geometric quotient. Because of this we consider a weaker quotient, namely, a categorical one.

\textit{The double coset variety}~$\dcosets{\GroupF}{\GroupG}{\GroupH}$ is defined to be the underlying space of the categorical quotient~$\GroupG \rightarrow \dcosets{\GroupF}{\GroupG}{\GroupH}$ with respect to the described action of~$\GroupF \times \GroupH$, if this quotient exists. If the subgroups~$\GroupF$ and~$\GroupH$ are reductive then this variety exists and coincides with the spectrum~$\Spec ({}^{\GroupF} \kk[\GroupG]^{\GroupH})$ of the algebra of regular functions on~$\GroupG$ invariant under the action of~$\GroupF \times \GroupH$. Moreover, if~$\GroupG$ is also reductive then by a result of Luna~\cite{LunaClosedOrbits} the action $\GroupF \times \GroupH : \GroupG$ is stable\footnote{Reductivity of~$\GroupG$ is essential: consider the group~$\GroupB$ of the upper-triangular matrices and its subgroup~$\GroupT$ of the diagonal matrices; the action~$\GroupT \times \GroupT : \GroupB$ is not stable.}, hence~$\dcosets{\GroupF}{\GroupG}{\GroupH}$ parametrises generic (closed) double cosets.

In this paper we consider the case when the subgroups~$\GroupF$ and~$\GroupH$ are not reductive. In this setting one can not guarantee that~$\dcosets{\GroupF}{\GroupG}{\GroupH} = \Spec ({}^{\GroupF}\kk[\GroupG]^{\GroupH})$; moreover,~$\dcosets{\GroupF}{\GroupG}{\GroupH}$ does not necessarily exist. To illustrate this we give the following examples:

\begin{enumerate}[I.]
\item\label{example0}
A unipotent group~$\GroupG$ and a subgroup~$\GroupU$ of~$\GroupG$ such that the variety~$\dcosets{\GroupU}{\GroupG}{\GroupU}$ does not exist.
\item\label{example1}
A reductive group~$\GroupG$ and two subgroups~$\GroupF, \GroupH$ such that the variety~$\dcosets{\GroupF}{\GroupG}{\GroupH}$ does not exist.
\item\label{example2}
A semisimple group~$\GroupG$ and two subgroups~$\GroupF, \GroupH$ such that the algebra of~$\GroupF \times \GroupH$-invariant regular functions~$R = {}^{\GroupF} \kk[ \GroupG ]^{\GroupH}$ is finitely generated and the natural morphism~$\pi : \GroupG \rightarrow \Spec R$ is surjective, but~$\pi$ is not a categorical quotient.
\end{enumerate}

It is interesting to remark that though~$\dcosets{\GroupU}{\GroupG}{\GroupU}$ considered in Example~\ref{example0} does not exist as an algebraic variety, it does exist as a constructible space. Thus, here we observe the same phenomenon as in~\cite{ArzhCelik}, \cite{CampoNeuen} and~\cite{Celik}, namely, an action that admits no quotient in the category of algebraic varieties does admit one in the category of constructible spaces.

In Example~\ref{example2} the categorical quotient~$\dcosets{\GroupF}{\GroupG}{\GroupH}$ exists in the category of algebraic varieties; its underlying space is the blow-up of~$\Spec ({}^{\GroupF} \kk[ \GroupG ]^{\GroupH})$ at one point; moreover, in this example the categorical quotient separates generic double cosets.

The author would like to thank I.~V.~Arzhantsev for stating the problem and helpful discussions.

\section{Preliminaries on categorical quotients}
Let an algebraic group~$\GroupG$ act on an algebraic variety~$X$. Recall that the categorical quotient of this action is a~$\GroupG$-invariant (i.~e., constant on \hbox{$\GroupG$-orbits}) morphism \hbox{$\pi_{\GroupG} : X \rightarrow Y$} such that every~$\GroupG$-invariant morphism $\varphi : X \rightarrow Z$ factors uniquely through~$\pi_{\GroupG}$, that is, there is a unique morphism~$\tilde{\varphi}$ making the following diagram commutative:
\begin{equation*}
\xymatrix{
X		\ar[rr]^{\varphi}	\ar[rd]_{\pi_{\GroupG}}		&&		Z.		\\
&		Y		\ar[ru]_{\tilde{\varphi}}
}
\end{equation*}
The universal property of~$\pi_{\GroupG}$ implies that~$Y$ is defined uniquely up to isomorphism. Remark that~$\pi_{\GroupG}$ is necessarily surjective. By abuse of language we will sometimes call the variety~$Y = X \catquot \GroupG$ the categorical quotient.

If~$\GroupG$ is reductive and~$X$ is affine then the categorical quotient for the action~$\GroupG : X$ is~$\pi_{\GroupG} : X \rightarrow Y = \Spec \kk[X]^{\GroupG}$ with morphism~$\pi_{\GroupG}$ corresponding to inclusion~$\kk[X]^{\GroupG} \subset \kk[X]$; in this case~$\pi_{\GroupG}$ has an important additional property: it separates closed orbits. If~$\GroupG$ is not reductive then the quotient~$X \catquot \GroupG$ does not necessarily exist. Examples of actions not admitting a categorical quotient are given in~\cite[4.3]{VinbergPopov}, \cite{CampoNeuen}, \cite{ArzhCelik}. Let us point out one example that we will make use of.
\begin{exa}\cite[4.3]{VinbergPopov}\label{simple-example}
There is no categorical quotient for the action of a one-dimensional unipotent group~$\GroupU$ on space~$\Mat_{2 \times 2}$ of~$2 \times 2$-matrices given by the formula
\begin{equation*}
\lambda \circ \begin{pmatrix}a_{11} & a_{12} \\ a_{21} & a_{22}\end{pmatrix} = 
\begin{pmatrix}1 & \lambda \\ 0 & 1\end{pmatrix} \begin{pmatrix}a_{11} & a_{12} \\ a_{21} & a_{22}\end{pmatrix}.
\end{equation*}
Remark that we have~$\kk [\Mat_{2 \times 2}]^{\GroupU} = \kk [ a_{21}, a_{22}, \det ]$ and the canonical morphism $\pi : \Mat_{2 \times 2} \rightarrow \Spec \kk [\Mat_{2 \times 2}]^{\GroupU} \isom \AA^3$ separates~$\GroupU$-orbits of generic points having~$a_{21} \neq 0$ or~$a_{22} \neq 0$. The image of this morphism is~$\AA^3$ without the punctured line~$\{ a_{21} = a_{22} = 0, \det \neq 0 \}$. Since the image of~$\pi$ is not open, by~\cite[Corollary~1.4]{ArzhCelik} the action~$\GroupU : \Mat_{2 \times 2}$ has no categorical quotient in the category of algebraic varieties.
\end{exa}

The morphism~$\pi$ considered in Example~\ref{simple-example} can be regarded as a quotient morphism after an appropriate modification to the definition of categorical quotient. It turns out that admitting only morphisms into \textit{varieties} as categorical quotients is overly restrictive for certain actions~$\GroupG : X$. To work around this Bialynicki-Birula introduced in~\cite{BirulaDC} the category of dense constructible subsets. This approach has been further developed in~\cite{ArzhCelik} to permit maps into constructible spaces as candidates for quotient morphisms. Recall that a constructible space is a topological space with a sheaf of functions admitting a finite cover by subsets that are isomorphic (as spaces with functions) to constructibe subsets of affine varieties. A morphism of constructible spaces is a morphism of spaces with functions. We say that a \textit{constructible quotient} is a categorical quotient in the category of constructible spaces. It is possible for an action~$\GroupG : X$ to have no quotient in the category of algebraic varieties, but to have a constructible quotient.

\begin{exa}\label{simple-example-constructible}
Let a unipotent group~$\GroupG$ act on a vector space~$V$. It follows from~\cite[Corollary~1.2]{ArzhCelik} that the action~$\GroupG : V$ admits a constructible quotient, provided that~$\kk [V]^{\GroupG}$ is finitely generated. If~$\rho : V \rightarrow \Spec \kk [V]^{\GroupG}$ is the morphism corresponding to the inclusion~$\kk[V]^{\GroupG} \subset \kk[V]$ then the constructible quotient is~$\rho : V \rightarrow \rho(V)$. In particular, the map~$\pi$ in Example~\ref{simple-example} is a constructible quotient for the action~$\GroupU : \Mat_{2 \times 2}$.
\end{exa}

Let us point out a fact concerning quotients under two commuting actions; it will be used to identify double coset varieties with quotients of homogeneous spaces. Let~$\GroupF \times \GroupH$ act on a variety~$X$ and let there be a categorical quotient~$\pi_{\GroupF} : X \rightarrow Y = X \catquot \GroupF$. The group~$\GroupH$ acts on~$Y$ as an abstract group: if~$y = \pi_{\GroupF}(x)$ then~$h \circ y = \pi_{\GroupF}(h \circ x)$. By~\cite[Theorem.~7.1.4]{Birula} this action is regular. Moreover, existence of~$Y \catquot \GroupH$ is equivalent to existence of~$X \catquot (\GroupF \times \GroupH)$ and these two quotients coincide:
\begin{equation*}
\xymatrix{
X		\ar[rr]^{\pi_{\GroupF \times \GroupH}}	\ar[rd]_{\pi_{\GroupF}}		&&		X \catquot (\GroupF \times \GroupH) = Y \catquot \GroupH.		\\
&		Y = X \catquot \GroupF		\ar[ru]_{\pi_{\GroupH}}
}
\end{equation*}

The following statement will be used in Proposition~\ref{example1-proof}.
\begin{lem}\label{uzhosnah}
Let an algebraic group~$\GroupG$ act on an algebraic variety~$Y$. Suppose that there is~$y_0 \in Y$ that belongs to closure of every~$\GroupG$-orbit. Consider the action~$\GroupG : X \times Y$, where~$X$ is a normal variety and~$\GroupG$ acts trivially on the first factor. Let~$W \subseteq X \times Y$ be a \hbox{$\GroupG$-invariant} open subset. Suppose that~$W$ contains~$X_0 \times Y$, where $X_0 \subseteq X$ is a dense subset, and~\hbox{$\pr(W) = X$}, where~$\pr$ is the projection onto the first factor. Then the action~$\GroupG : W$ has~$\pr : W \rightarrow X$ as a categorical quotient both in the category of algebraic varieties and in the category of constructible spaces.
\end{lem}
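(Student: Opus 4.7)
The projection $\pr \colon W \to X$ is $\GroupG$-invariant because $\GroupG$ acts trivially on $X$, so only the universal property remains to be checked. Let $\varphi \colon W \to Z$ be a $\GroupG$-invariant morphism, where $Z$ is either an algebraic variety or a constructible space. The plan is to use the two hypotheses in tandem: the orbit-closure condition on $y_0$ together with $X_0 \times Y \subseteq W$ will force $\varphi$ to be constant along each fibre $\{x\} \times Y$ for $x \in X_0$, and the surjectivity $\pr(W) = X$ will let one extend the resulting factorisation locally over all of $X$.

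For the first step, fix $x \in X_0$; the fibre $\{x\} \times Y$ lies in $W$ and is $\GroupG$-stable. For $y \in Y$, the orbit $\{x\} \times \GroupG y$ is sent by $\varphi$ to a single (closed) point $p \in Z$, and $\varphi^{-1}(p)$ is closed in $W$, so it also contains the orbit closure taken in $W$, which equals $(\{x\} \times \overline{\GroupG y}) \cap W$ and contains the point $(x, y_0) \in X_0 \times Y \subseteq W$. Hence $\varphi(x, y) = \varphi(x, y_0)$ for every $y$, and
\[
\tilde\varphi_{0} \colon X_0 \to Z, \qquad x \mapsto \varphi(x, y_0),
\]
is a morphism, given by the composition $X_0 \hookrightarrow X_0 \times \{y_0\} \hookrightarrow W \xrightarrow{\varphi} Z$. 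To extend $\tilde\varphi_0$ to $X$, I work locally: given $(x, y) \in W$, openness of $W$ in $X \times Y$ yields an open $V \ni x$ in $X$ and an open $U \ni y$ in $Y$ with $V \times U \subseteq W$, and for any $y_V \in U$ the rule $x' \mapsto \varphi(x', y_V)$ defines a morphism $\tilde\varphi_V \colon V \to Z$. Two such local data $(V_1, y_1), (V_2, y_2)$ restrict to $\tilde\varphi_0$ on $(V_1 \cap V_2) \cap X_0$, which is dense in $V_1 \cap V_2$ because $X_0$ is dense in the normal (in particular reduced) variety $X$; separatedness of $Z$ — or its analogue for constructible spaces — then forces agreement on $V_1 \cap V_2$. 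The $\tilde\varphi_V$ glue to a morphism $\tilde\varphi \colon X \to Z$, the identity $\varphi = \tilde\varphi \circ \pr$ holds because for any $(x, y) \in W$ one may take $y_V = y$ in the construction, and uniqueness of $\tilde\varphi$ is immediate from the surjectivity of $\pr$.

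The substantive content is concentrated in the first step, and I expect the main obstacle to be the verification that $\varphi$ is constant along $\{x\} \times Y$ for $x \in X_0$: this is where the orbit-closure hypothesis on $y_0$ and the inclusion $X_0 \times Y \subseteq W$ must cooperate. Once that is settled, the extension to $X$ is a routine cover-and-glue argument, and because it relies only on openness, closedness of points, and the principle that morphisms agreeing on a dense subset coincide, it applies uniformly in the category of algebraic varieties and in the category of constructible spaces.
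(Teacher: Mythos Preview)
Your argument is correct and takes a genuinely different route from the paper's. The paper extends $\varphi$ \emph{continuously} to $W \cup (X \times \{y_0\})$ by a limit argument, then invokes the theorem on removable singularities (which is where normality of $X$ is used) to upgrade the continuous extension $\varphi|_{X\times\{y_0\}}$ to a morphism; this requires first arranging affine charts on $X\times\{y_0\}$ whose images land in affine pieces of $Z$. Your approach is more direct and purely algebraic: you never try to define $\varphi$ at points $(x,y_0)\notin W$, but instead use the openness of $W$ and the surjectivity $\pr(W)=X$ to produce local sections $V\to V\times\{y_V\}\subset W$, compose with $\varphi$, and glue the resulting morphisms $\tilde\varphi_V$ using that they all agree with $x\mapsto\varphi(x,y_0)$ on the dense set $X_0\cap V$. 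In particular your argument uses normality of $X$ only through reducedness (needed for ``equal on a dense set implies equal''), so it actually proves the lemma under a weaker hypothesis. Both proofs tacitly rely on the target $Z$ having closed points and the ``equalizer of two morphisms is closed'' property; the paper uses this implicitly when it asserts that limits are unique, and you invoke it as separatedness---for constructible $Z$ this is justified exactly as the paper does in its Step~3, by locally embedding the relevant chart of $Z$ into an ambient affine variety.
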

\begin{proof}
Let us fix a~$\GroupG$-invariant morphism~$\varphi : W \rightarrow Z$ into an algebraic variety~$Z$ (resp., into a constructible space) and show that it factors uniquely through~$\pr$.

Step 1. We claim that~$\varphi$ extends to a continuous map on~$W \cup X \times \{y_0\}$. Let us fix a point~$(x^\prime, y_0) \not\in X \times \{y_0\}$ and an arbitrary sequence~$\{x_n\} \subset X_0$ such that~$x_n \rightarrow x^\prime$. Now we show that the sequence~$\varphi(x_n, y_0)$ converges. Since~$\pr(W) = X$, there is a point~$(x^\prime, y)$ in~$W$ for some~$y \in Y$. The points~$(x_n, y_0)$ and~$(x_n, y)$ belong to~$W$, hence by~$\GroupG$-invariance of~$\varphi$ we have~$\varphi(x_n, y) = \varphi(x_n, y_0)$, thus~$\lim\limits_{n \rightarrow +\infty} \varphi(x_n, y_0) = \lim\limits_{n \rightarrow +\infty} \varphi(x_n, y) = \varphi(x^\prime,y)$. Since a converging sequence can have only one limit,~$\lim\limits_{n \rightarrow +\infty} \varphi(x_n, y_0)$ does not depend on choice of~$(x^\prime, y) \in W$. For the extended map~$\varphi$ we have~$\varphi(x,y) = \varphi(x,y_0)$, so continuity of~$\varphi|_{X \times \{y_0\}}$ implies continuity of~$\varphi$ on~$W \cup X \times \{y_0\}$.

Step 2. Now we show that~$X \times \{y_0\}$ can be covered by open affine sets~$X_i \times \{y_0\}$ such that the image of~$\varphi : X_i \times \{y_0\} \rightarrow Z$ is contained in some affine subset of~$Z$. Let~$\{Z_i\}$ be an affine covering of~$Z$ and $\{U_i\}$ be an affine covering of~$X$. The set~$V_{ij} = \varphi^{-1}(\varphi(U_i) \cap Z_j)$ is open in~$U_i$. Every set~$V_{ij}$ is a union of principal open subsets~$V_{ij} = \cup_k V_{ijk}$. The sets~$V_{ijk}$ make up the required covering of~$X \times \{y_0\}$.

Step 3. Since~$\varphi(x,y) = \varphi(x, y_0)$, we have~$\varphi = \tilde{\varphi} \circ \rho$, where~$\tilde{\varphi} = \varphi|_{X \times \{y_0\}}$ and~$\rho$ is the map~$W \rightarrow X \times \{y_0\}$, $\rho(x,y) = (x,y_0)$. Denote~$\imath$ the identification of~$X$ and~$X \times \{y_0\}$: $\imath(x) = (x,y_0)$. We have~$\varphi = \tilde{\varphi} \circ \imath \circ \pi$, so~$\varphi$ factors through~$\pi$. It remains to verify that~$\varphi|_{X \times \{y_0\}}$ is a morphism. The variety~$X$ is normal, hence the affine opens~$X_i \times \{y_0\}$ constructed at step~2 are normal varieties, too. Restrictions of~$\varphi$ to these opens are morphisms of affine varieties; if~$Z$ is a constructible space then~$\varphi|_{X_i \times \{y_0\}}$ is a morphism into a constructible set, but it can be regarded as a morphism into an affine variety containing~$\varphi(X_i \times \{y_0\})$. By theorem on removable singularities the continuous extensions of~$\varphi|_{X_i \times \{y_0\}}$ are morphisms.
\end{proof}

\section{Existence and non-existence of double coset varieties}\label{main-section}

\textbf{\ref{main-section}.1.} Consider a unipotent group~$\GroupG$ and a subgroup~$\GroupU$:
\begin{equation*}
\begin{matrix}
	\GroupG = \begin{pmatrix}
	1		&		*		&		*		&		*	\\
			&		1		&		*		&		*	\\
			&				&		1		&		0	\\
			&				&				&		1
	\end{pmatrix},
&
	\GroupU = \begin{pmatrix}
	1		&		*		&		0		&		0	\\
			&		1		&		0		&		0	\\
			&				&		1		&		0	\\
			&				&				&		1
\end{pmatrix},
\end{matrix}
\end{equation*}
here~$*$ denotes an arbitrary number. We claim that if we take~$\GroupF = \GroupH = \GroupU$ then the double coset variety~$\dcosets{\GroupF}{\GroupG}{\GroupH}$ does not exist. Remark that the group~$\GroupF \times \GroupH = \GroupU \times \GroupU$ is unipotent, hence every double~$(\GroupU, \GroupU)$-coset is closed~\cite[1.3]{VinbergPopov}; had~$\GroupF$ and~$\GroupH$ been reductive, this would have implied existence of the \textit{geometric} quotient~$\GroupG \rightarrow \GroupG / (\GroupF \times \GroupH)$.

\begin{stmt}
The action~$\GroupU \times \GroupU : \GroupG$ has no categorical quotient in the category of algebraic varieties. It admits a constructible quotient and the constructible quotient parametrises generic double cosets.
\end{stmt}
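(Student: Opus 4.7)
The plan is to parameterize $\GroupG$ by its matrix entries, compute the $\GroupU \times \GroupU$-action and its invariants explicitly, determine the image of the invariant morphism $\pi : \GroupG \to \Spec \kk[\GroupG]^{\GroupU \times \GroupU}$, and then invoke Example~\ref{simple-example-constructible} for the constructible quotient and the non-openness criterion used in Example~\ref{simple-example} (i.e.\ \cite[Corollary~1.4]{ArzhCelik}) for non-existence in the category of varieties.

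First I would write an element of $\GroupG$ as a matrix with free entries $a, b, c, d, e$ in positions $(1,2), (1,3), (1,4), (2,3), (2,4)$ and parameterize the two copies of $\GroupU$ by scalars $\lambda, \mu$. A direct computation of $u(\lambda) \, g \, u(\mu)^{-1}$ yields
\begin{equation*}
(\lambda, \mu) \circ (a, b, c, d, e) = (a + \lambda - \mu, \, b + \lambda d, \, c + \lambda e, \, d, \, e).
\end{equation*}
In particular $d, e$ are invariant; $\mu$ alone translates $a$ freely; and $\lambda$ translates $(b, c)$ along the line of direction $(d, e)$. From this one reads off (or verifies by analyzing the kernel of the derivation $d \, \partial_b + e \, \partial_c$ on $\kk[b, c, d, e]$) that $\kk[\GroupG]^{\GroupU \times \GroupU} = \kk[d, e, s]$ with $s := eb - dc$, so $\Spec \kk[\GroupG]^{\GroupU \times \GroupU} \isom \AA^3$.

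Next, examining $\pi : \GroupG \to \AA^3$, $(a, b, c, d, e) \mapsto (d, e, eb - dc)$: for $(d, e) \neq (0, 0)$ every value of $s$ is realized, while $d = e = 0$ forces $s = 0$. Hence $\pi(\GroupG) = \AA^3 \setminus \{d = e = 0, \, s \neq 0\}$, which is not open in $\AA^3$. By \cite[Corollary~1.4]{ArzhCelik}, exactly as in Example~\ref{simple-example}, this rules out a categorical quotient in the category of algebraic varieties. Since $\GroupG \isom \AA^5$ as an affine variety and $\kk[\GroupG]^{\GroupU \times \GroupU}$ is finitely generated, Example~\ref{simple-example-constructible} supplies $\pi : \GroupG \to \pi(\GroupG)$ as the desired constructible quotient.

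Finally, to check that $\pi$ parameterizes generic double cosets, I would restrict to the $\GroupU \times \GroupU$-invariant open subset $\{(d, e) \neq (0, 0)\}$ and verify that its orbits coincide with the fibers of $\pi$. Two points with the same $(d, e)$ lie in a common orbit iff the system $b' - b = \lambda d$, $c' - c = \lambda e$ is consistent in $\lambda$ — equivalently iff $eb - dc = eb' - dc'$ — after which $\mu$ is freely chosen to match $a$-coordinates. Thus fibers of $\pi$ over the open set $\{(d, e) \neq 0\} \subset \AA^3$ are single $(\GroupU, \GroupU)$-double cosets, namely the generic ones. The only step requiring any real care is the explicit computation of the action in the first paragraph; once the invariant $s = eb - dc$ is identified, the description of $\pi(\GroupG)$ and the orbit/fiber identification are immediate.
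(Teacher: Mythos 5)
Your proof is correct, and the computations check out: the action formula $(\lambda,\mu)\circ(a,b,c,d,e)=(a+\lambda-\mu,\,b+\lambda d,\,c+\lambda e,\,d,\,e)$ is right, $s=eb-dc$ is indeed the missing generator, and the image $\AA^3\setminus\{d=e=0,\ s\neq 0\}$ is not open. The route differs from the paper's in presentation rather than in substance. The paper never computes ${}^{\GroupU}\kk[\GroupG]^{\GroupU}$ directly on the five-dimensional group; instead it realises $\GroupU$ as the stabiliser of the $4\times 2$-matrix $M$, identifies $\GroupG/\GroupU$ with $\Mat_{2\times 2}$ (your coordinates $b,c,d,e$ are exactly the top block of $gM$), observes that the residual left $\GroupU$-action is literally the action of Example~\ref{simple-example}, and then uses the general fact from the preliminaries that $\GroupG\catquot(\GroupU\times\GroupU)=(\GroupG/\GroupU)\catquot\GroupU$ to import both the non-existence statement and the constructible quotient wholesale. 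Your version buys self-containedness and makes the generic-fiber analysis completely explicit (the paper only asserts separation of generic cosets); the paper's version buys brevity and avoids re-deriving the invariant ring, at the cost of invoking the iterated-quotient lemma. One small imprecision in yours: Example~\ref{simple-example-constructible} is stated for a unipotent group acting \emph{linearly} on a vector space, whereas your action on $\GroupG\isom\AA^5$ is affine (the $a$-coordinate is translated), so you should either cite \cite[Corollary~1.2]{ArzhCelik} directly for the factorial affine variety $\AA^5$ (which is what that example rests on, and what the paper's final Proposition does in general), or first discard the $a$-coordinate to reduce to the linear action on $\kk[b,c,d,e]$. The same remark applies to your direct appeal to \cite[Corollary~1.4]{ArzhCelik}: its hypotheses are satisfied here, but it is worth saying why ($\AA^5$ is factorial and $\GroupU\times\GroupU$ is unipotent, hence has trivial character group).
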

\begin{proof}
Consider the action of~$\GroupG$ on space of~$4 \times 2$-matrices by left multiplication. The subgroup~$\GroupU$ is the stabiliser of the matrix
\begin{equation}\label{definition-of-M}
M = \begin{pmatrix}
0		&		0		\\
0		&		0		\\
1		&		0		\\
0		&		1
\end{pmatrix}.
\end{equation}

Therefore, the homogeneous space~$\GroupG / \GroupU$ is isomorphic to~$\AA^4$ and can be identified with the variety of matrices
\begin{equation*}
\begin{pmatrix}
*		&		*		\\
*		&		*		\\
1		&		0		\\
0		&		1
\end{pmatrix}.
\end{equation*}
After this identification the action of~$\GroupU$ on~$\GroupG / \GroupU$ becomes the matrix multiplication; it is therefore isomorphic to the action of~$\GroupU$ on space~$\Mat_{2 \times 2}$ of $2 \times 2$-matrices by left multiplication. Example~\ref{simple-example} shows that this action does not admit a categorical quotient. Therefore, $\dcosets{\GroupU}{\GroupG}{\GroupU} = (\GroupG / \GroupU) \catquot \GroupU$ does not exist.

From Example~\ref{simple-example-constructible} it follows that the action~$\GroupU \times \GroupU : \GroupG$ has a constructible quotient \hbox{$\pi : \GroupG \rightarrow \dcosets{\GroupU}{\GroupG}{\GroupU} \subset \AA^3$} which separates generic double cosets.
\end{proof}

\begin{xrem}
The constructible quotient~$\pi : \GroupG \rightarrow \dcosets{\GroupU}{\GroupG}{\GroupU}$ does not separate all closed double cosets. Indeed, all~$2 \times 2$-matrices with~$a_{21} = a_{22} = 0$ (we use notation of Example~\ref{simple-example}) are fixed under the action of~$\GroupU$ and have~$\det = 0$, hence their preimages in~$\GroupG$ are closed~$(\GroupU, \GroupU)$-cosets, which are mapped by~$\pi$ to~$0 \in \AA^3$.
\end{xrem}

\textbf{\ref{main-section}.2.} Take~$\GroupG = \GL_4$ and consider the action of~$\GroupG$ on~$4 \times 2$-matrices. Let~$\GroupH$ be the stabiliser of the matrix~$M$, $M$ being the same as in~\ref{definition-of-M}. The homogeneous space~$W = \GroupG / \GroupH$ is identified with the variety of~$4 \times 2$-matrices with non-zero columns. Let~$\GroupF$ be the subgroup of~$\GroupG$ consisting of the following matrices:
\begin{equation*}
\begin{pmatrix}
1		&		a		&		0		&		0		\\
		&		1		&		0		&		0		\\
		&				&		s		&		0		\\
		&				&				&		s
\end{pmatrix},~a \in \kk,~s \in \kk^\times.
\end{equation*}
The subgroup~$\GroupF$ acts on~$W$ via matrix multiplication.

\begin{stmt}\label{example1-proof}
The action~$\GroupF \times \GroupH : \GroupG$ does not admit a categorical quotient in the category of algebraic varieties, but has a constructible quotient.
\end{stmt}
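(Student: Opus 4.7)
The plan is to mimic the argument of the previous proposition, reducing the double coset to an instance of Example~\ref{simple-example}, but with an extra torus-quotient step handled by Lemma~\ref{uzhosnah}. First identify $\dcosets{\GroupF}{\GroupG}{\GroupH}$ with $W \catquot \GroupF$. Writing each matrix in $W$ in block form $A = \begin{pmatrix} B \\ C \end{pmatrix}$ with $B, C \in \Mat_{2 \times 2}$, the $\GroupF$-action becomes $(B, C) \mapsto (UB, sC)$, where $U = \begin{pmatrix} 1 & a \\ 0 & 1 \end{pmatrix}$ and $s \in \kk^\times$ are the natural coordinates on $\GroupF$. Observe that $\GroupF$ is a direct product $\GroupF_u \times \GroupF_t$ of its unipotent subgroup $\GroupF_u$ (parameter $a$) and its one-dimensional torus $\GroupF_t$ (parameter $s$). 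By the composition-of-quotients principle recalled just above Lemma~\ref{uzhosnah}, existence of $W \catquot \GroupF$ is then equivalent to existence of $(W \catquot \GroupF_t) \catquot \GroupF_u$, and the two coincide.

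The crux is to compute $W \catquot \GroupF_t$ using Lemma~\ref{uzhosnah}. Take $X = Y = \Mat_{2 \times 2}$, where $X$ is the $B$-factor (with trivial $\GroupF_t$-action) and $Y$ is the $C$-factor (scaled by $\GroupF_t$). The point $y_0 = 0 \in Y$ lies in the closure of every $\GroupF_t$-orbit since $\lim_{s \to 0} sC = 0$. One checks that $W$ is a $\GroupF_t$-invariant open subset of $X \times Y$; that $X_0 \times Y \subseteq W$ for the dense open set $X_0 = \{\det B \neq 0\} \subset X$ (invertibility of $B$ forces the full rank of $A$); and that $\pr(W) = X$ (for any $B$, the matrix with $C = I$ lies in $W$). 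The lemma then produces $W \catquot \GroupF_t \cong \Mat_{2 \times 2}$ with quotient morphism $(B, C) \mapsto B$, simultaneously in the variety and in the constructible category.

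It remains to observe that the induced $\GroupF_u$-action on the quotient $\Mat_{2 \times 2}$ is $B \mapsto UB$, which is exactly the action of Example~\ref{simple-example}. By that example $\Mat_{2 \times 2} \catquot \GroupF_u$ does not exist in the category of algebraic varieties, while by Example~\ref{simple-example-constructible} it does exist as a constructible space. Transferring back through the composition of quotients yields both claims of the proposition.

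The main delicate point will be verifying the hypotheses of Lemma~\ref{uzhosnah} carefully --- in particular the inclusion $X_0 \times Y \subseteq W$ and the surjectivity of $\pr$ --- and checking that the composition-of-quotients principle carries over to the constructible-space setting, so that the constructible quotient of $\Mat_{2 \times 2}$ by $\GroupF_u$ indeed serves as the constructible quotient of $W$ by $\GroupF$.
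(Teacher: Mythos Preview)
Your proof is correct and follows essentially the same route as the paper: factor $\GroupF$ as torus times unipotent, apply Lemma~\ref{uzhosnah} (with $X=Y=\Mat_{2\times 2}$) to identify the torus quotient $W\catquot\GroupF_t$ with $\Mat_{2\times 2}$ via projection to the top block, and then invoke Examples~\ref{simple-example} and~\ref{simple-example-constructible}. The only cosmetic difference is your choice of $X_0=\{\det B\neq 0\}$ versus the paper's $X_0=\{B\text{ has non-zero columns}\}$; both are dense and satisfy $X_0\times Y\subseteq W$, so nothing substantive changes.
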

\begin{proof}
The group~$\GroupF$ is a direct product~$\GroupF = \GroupU \times \GroupS$ of one-dimensional unipotent group~$\GroupU$ and one-dimensional torus~$\GroupS$. The categorical quotient for the action~$\GroupS : W$ is~$\pr : W \rightarrow \Mat_{2 \times 2}$, where the projection~$\pr$ erases the bottom half of matrices of~$W$. Indeed, one can apply Lemma~\ref{uzhosnah} with the acting group~$\GroupS$ and $X = Y = \Mat_{2 \times 2}$ representing top and bottom halves of matrices respectively, and with~$X_0$ consisting of matrices with non-zero columns.

Thus, had the quotient~$W \catquot \GroupF = \dcosets{\GroupF}{\GL_4}{\GroupH}$ existed, it would have been also~$(W \catquot \GroupS) \catquot \GroupU = \Mat_{2 \times 2} \catquot \GroupU$, but, according to Example~\ref{simple-example}, the latter quotient does not exist.

By Lemma~\ref{uzhosnah} and Example~\ref{simple-example-constructible}, the actions~$\GroupS : W$ and~$\GroupU : \Mat_{2 \times 2}$ both have a constructible quotient, thus~$\dcosets{\GroupF}{\GL_4}{\GroupH} = (W \catquot \GroupS) \catquot \GroupU$ exists as a constructible space.
\end{proof}

\textbf{\ref{main-section}.3.} This example is based on~\cite[4.5]{ArzhCelik}. Consider the following symmetric bilinear form on~$\kk^4$:~$(e_1, e_4) = (e_2, e_3) = 1$ and the other pairings of basis vectors are zero. The cone
\begin{equation*}
X = \{ {\bf x} \in \kk^4\ |\ x_1x_4 + x_2x_3 = 0 \} \setminus \{(0,0,0,0)\}
\end{equation*}
is the collection of non-zero isotropic vectors, therefore~$X = \SO_4 / \GroupH$, where~$\GroupH$ is the stabiliser of a non-zero isotropic vector. As~$\GroupF$ we take the following unipotent subgroup of~$\SO_4$:
\begin{equation*}
\begin{pmatrix}
1		&		a		&		0		&		0		\\
		&		1		&		0		&		0		\\
		&				&		1		&		-a		\\
		&				&				&		1
\end{pmatrix},~a \in \kk.
\end{equation*}

The algebra~$\kk [X]^{\GroupF}$ is freely generated by~$x_2$ and~$x_4$; indeed, these two functions are~$\GroupF$-invariant and generic orbits meet the plane~$\{x_1 = x_3 = 0\}$, so there are no other generators. It is clear that the canonical morphism $\pi : X \rightarrow \Spec \kk [X]^{\GroupF} = \AA^2$ is surjective. Nevertheless,~$\Spec \kk [X]^{\GroupF}$ is not the quotient for~$\GroupF : X$ because the following morphism~$\varphi : X \rightarrow \PP^1$ does not factor through~$\pi$:
\begin{equation*}
\varphi(x_1, x_2, x_3, x_4) = (x_2 : x_4) = (x_1 : -x_3).
\end{equation*}
Indeed,~$\pi(x_1, x_2, x_3, x_4) = (x_2, x_4)$, and from~$\varphi = \tilde{\varphi} \circ \pi$ it follows that $\tilde{\varphi}(x_2,x_4) = (x_2 : x_4)$ when~$x_2 \neq 0$ or~$x_4 \neq 0$, hence~$\tilde{\varphi}$ is not continuous in~$(0,0)$, which is not possible.

Let us show that the considered action has a categorical quotient, though it does not coincide with~$\Spec \kk[X]^{\GroupF}$.

\begin{stmt}
The action~$\GroupF : X$ has a categorical quotient in the category of algebraic varieties.
\end{stmt}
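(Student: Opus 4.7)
The plan is to realize the categorical quotient as the blow-up $Y = \mathrm{Bl}_{(0,0)} \AA^2$ of $\Spec \kk[X]^{\GroupF} = \AA^2$ at the origin, embedded as the incidence variety $Y = \{((a,b),(u:v)) \in \AA^2 \times \PP^1 : av = bu\}$. Define the candidate quotient morphism
\[
q \colon X \to Y, \qquad q(x_1,x_2,x_3,x_4) = \bigl((x_2,x_4),\, (x_2:x_4)\bigr) = \bigl((x_2,x_4),\, (x_1:-x_3)\bigr).
\]
The two descriptions of the $\PP^1$-component coincide on $X$ by the isotropy relation $x_1 x_4 = -x_2 x_3$, and together they show that $q$ is a well-defined morphism even on the fixed locus $\{x_2 = x_4 = 0\}$, where $(x_1, x_3) \neq (0,0)$. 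Both $(x_2, x_4)$ and the ratio $(x_2 : x_4)$ are $\GroupF$-invariant, so $q$ is $\GroupF$-invariant, and a direct check shows it is surjective.

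To verify the universal property, cover $Y$ by the two standard affine charts $U_1 = \{u \neq 0\} \cong \AA^2_{(a, r)}$ with $r = v/u$, and $U_2 = \{v \neq 0\} \cong \AA^2_{(b, s)}$ with $s = u/v$, and exhibit explicit morphic sections
\[
\sigma_1(a, r) = (1, a, -r, ar) \in X, \qquad \sigma_2(b, s) = (s, sb, -1, b) \in X,
\]
satisfying $q \circ \sigma_i = \Id_{U_i}$. Given any $\GroupF$-invariant morphism $\psi \colon X \to Z$ into an algebraic variety, set $\tilde{\psi}_i := \psi \circ \sigma_i \colon U_i \to Z$.

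Let $X^0 = X \cap \{(x_2, x_4) \neq (0,0)\}$. A direct computation with the $\GroupF$-action shows that any two points of $X^0$ sharing their $(x_2, x_4)$-coordinates lie in a common $\GroupF$-orbit (the fibre of $X^0 \to \AA^2 \setminus \{(0,0)\}$ over $(a,b)$ is the line $\{x_1 b + a x_3 = 0\}$ in the $(x_1,x_3)$-plane, which coincides with the orbit through any of its points). Consequently, on the dense open $\{a \neq 0\}$ of $U_1 \cap U_2$ the sections $\sigma_1(a, r)$ and $\sigma_2(ar, 1/r)$ share $(x_2, x_4)$-coordinates $(a, ar) \neq 0$, so $\tilde{\psi}_1 = \tilde{\psi}_2$ there by $\GroupF$-invariance of $\psi$. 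Since $Z$ is separated and $U_1 \cap U_2$ is irreducible, the agreement extends to all of $U_1 \cap U_2$, and the $\tilde{\psi}_i$ glue to a morphism $\tilde{\psi} \colon Y \to Z$. Similarly, for $x \in X^0$ the point $\sigma_i(q(x))$ lies in the same $\GroupF$-orbit as $x$, so $\tilde{\psi}(q(x)) = \psi(x)$ on $X^0$; by density of $X^0$ in the irreducible variety $X$ and separatedness of $Z$, the identity $\tilde{\psi} \circ q = \psi$ holds on all of $X$. Uniqueness of $\tilde{\psi}$ is immediate from surjectivity of $q$.

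The main obstacle is the exceptional divisor of $Y$, where $x$ and $\sigma_i(q(x))$ may be distinct fixed points lying in different singleton orbits, so that $\GroupF$-invariance alone cannot produce the equality $\psi(x) = \psi(\sigma_i(q(x)))$ directly; the density-plus-separatedness argument circumvents this by reducing everything to the verification on the dense open $X^0$, avoiding a direct continuity computation on the fixed locus.
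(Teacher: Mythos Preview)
Your proof is correct and identifies the same categorical quotient as the paper: the blow-up of the origin in $\AA^2$, with the same quotient morphism $q=\rho$. The only difference is in how the universal property is verified: the paper first proves the scaling invariance $\varphi(cx_1,x_2,cx_3,x_4)=\varphi(x_1,x_2,x_3,x_4)$ directly (handling the fixed locus $x_2=x_4=0$ by a limit argument) and then writes down $\tilde\varphi$ by the global formula $\tilde\varphi((x,y),(u:v))=\varphi(u,x,-v,y)$, whereas you work chart-by-chart with explicit sections $\sigma_i$ and use density plus separatedness of the target to glue and to extend $\tilde\psi\circ q=\psi$ across the exceptional locus. Your sections are exactly the paper's formula restricted to the standard charts, so the two arguments are the same in substance; your separatedness formulation is simply the algebro-geometric way of saying what the paper's limit computation expresses.
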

\begin{proof}
The quotient is the blow-up of the origin in~$\AA^2$
\begin{equation*}
\hat{\AA}^2 = \{ ((x, y), (u:v)) \in \AA^2 \times \PP^1\ |\ xv - yu = 0 \}
\end{equation*}
with the morphism~$\rho : X \rightarrow \hat{\AA}^2$, \hbox{$\rho(x_1, x_2, x_3, x_4) = ((x_2, x_4), (x_1 : -x_3)) = ((x_2, x_4), (x_2 : x_4))$}.

Let us check that every~$\GroupF$-invariant morphism~$\varphi : X \rightarrow Z$ factors through~$\rho$. By~$\GroupF$-invarience of~$\varphi$ we have~$\varphi(cx_1, x_2, cx_3, x_4) = \varphi(x_1, x_2, x_3, x_4)$. Indeed, if one of~$x_2, x_4$ is not zero then the point $(x_1, x_2, x_3, x_4)$ and the points $(cx_1, x_2, cx_3, x_4)$ belong to one orbit of~$\GroupF$; if~$x_2 = x_4 = 0$ then we have
\begin{align*}
\varphi(x_1, 0, x_3, 0) =
	\lim\limits_{t \rightarrow 0} \varphi(x_1, tx_1, &x_3, -tx_3) =	\\
	&		=		\lim\limits_{t \rightarrow 0} \varphi(cx_1, tx_1, cx_3, -tx_3) =
	\varphi(cx_1, 0, cx_3, 0).
\end{align*}
Define~$\tilde{\varphi} : \hat{\AA}^2 \rightarrow Z$ as the morphism taking~$((x, y), (u:v))$ to~$\varphi(u, x, -v, y)$. The reasoning above shows that~$\tilde{\varphi}$ is well defined. Thus,~$\varphi = \tilde{\varphi} \circ \rho$, i.~e.,~$\varphi$ factors through~$\rho$. Since $\rho(X) = \hat{\AA}^2$, the morphism~$\tilde{\varphi}$ can be chosen uniquely.
\end{proof}

\begin{xrem}
It is clear that~$\rho : X \rightarrow \hat{\AA}^2$ separates orbits of points having $x_2 \neq 0$ or \hbox{$x_4 \neq 0$}. However,~$\rho$ does not separate all closed orbits: the points $z = (x_1, 0, x_3, 0)$ and~$z^\prime = (cx_1, 0, cx_3, 0)$ are~$\GroupF$-fixed, but $\rho(z) = \rho(z^\prime)$. Thus, the quotient $q : \SO_4 \rightarrow \dcosets{\GroupF}{\SO_4}{\GroupH} = \hat{\AA}^2$ separates generic double cosets, but fails to separate all closed double cosets.
\end{xrem}

\textbf{\ref{main-section}.4.} Remark that in Examples~\ref{example0} and~\ref{example1} the actions~$\GroupF \times \GroupH : \GroupG$ have no categorical quotient in the category of algebraic varieties but do admit one in the category of constructible spaces.
\begin{question}
Let~$\GroupG$ be a connected affine algebraic group and~$\GroupF, \GroupH$ be closed subgroups in~$\GroupG$. Is it true that~$\dcosets{\GroupF}{\GroupG}{\GroupH}$ exists as a constructible space?
\end{question}

The following proposition gives a partial answer to this question.

\begin{stmt}
Let~$\GroupG$ be a connected affine algebraic group and~$\GroupF, \GroupH \subset \GroupG$ be closed connected subgroups with trivial character groups. Suppose that the algebra~${}^{\GroupF} \kk[\GroupG]^{\GroupH}$ is finitely generated and let~$\pi : \GroupG \rightarrow \Spec({}^{\GroupF} \kk[\GroupG]^{\GroupH})$ be the canonical morphism. Then~$\dcosets{\GroupF}{\GroupG}{\GroupH}$ exists as a constructible space and the map \hbox{$\pi : \GroupG \rightarrow \pi(\GroupG)$} is the constructible quotient for the action of~$\GroupF \times \GroupH$ on~$\GroupG$.
\end{stmt}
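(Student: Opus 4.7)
The plan is to reduce the statement to a direct application of \cite[Corollary~1.2]{ArzhCelik}, the same result that powers Example~\ref{simple-example-constructible}. Set $K = \GroupF \times \GroupH$ and regard the two-sided action of $\GroupF \times \GroupH$ on $\GroupG$ as a single one-sided action of $K$ via $(f,h) \circ g = fgh^{-1}$. The ring of $K$-invariants on $\GroupG$ is then precisely ${}^{\GroupF}\kk[\GroupG]^{\GroupH}$, and $\pi$ is the canonical morphism associated with this $K$-action.

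First I would verify the group-theoretic hypotheses needed for the cited corollary. Connectedness and affineness pass to direct products, so $K$ is a connected affine algebraic group. For the character group: any character $\chi : K \to \GL_1$ restricts to characters on the two factors $\GroupF \times \{1\}$ and $\{1\} \times \GroupH$, both of which are trivial by hypothesis, and since $\chi(f,h) = \chi(f,1)\,\chi(1,h)$ it follows that $\chi$ is trivial. Hence $K$ itself has trivial character group.

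Next I would invoke \cite[Corollary~1.2]{ArzhCelik}. All of its hypotheses---a connected affine algebraic group with trivial character group acting on an affine variety with a finitely generated invariant algebra---are now in place for $K$ acting on $\GroupG$. Its conclusion is exactly that $\pi(\GroupG)$ is a constructible subset of $\Spec\bigl({}^{\GroupF}\kk[\GroupG]^{\GroupH}\bigr)$ and that $\pi : \GroupG \to \pi(\GroupG)$ is a categorical quotient for the action $K : \GroupG$ in the category of constructible spaces. Unwinding the definition of the double coset variety in the constructible category, $\pi(\GroupG)$ \emph{is} $\dcosets{\GroupF}{\GroupG}{\GroupH}$.

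The main obstacle is essentially bookkeeping: one has to transfer the character-group and connectedness hypotheses cleanly from the individual factors $\GroupF,\GroupH$ to their product $K$, and then recognise the two-sided action as a one-sided product action so that \cite[Corollary~1.2]{ArzhCelik} applies verbatim. The connectedness assumption on the factors enters only in this character-group verification; after that the proof is a direct invocation of the cited corollary, with no further estimates required.
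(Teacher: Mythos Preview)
Your overall strategy matches the paper's: both reduce to \cite[Corollary~1.2]{ArzhCelik}. However, you have misstated the hypotheses of that corollary. It does \emph{not} apply to an arbitrary affine variety; the relevant input (as the title of \cite{ArzhCelik} already suggests) is a factoriality condition on the action --- concretely, that every $K$-invariant hypersurface in the variety is the zero locus of a $K$-invariant regular function. In Example~\ref{simple-example-constructible} this is automatic because $\kk[V]$ is a polynomial ring, hence factorial, and a unipotent group has no nontrivial characters. For a general connected affine algebraic group $\GroupG$ the coordinate ring $\kk[\GroupG]$ need not be a UFD, so this step requires an argument.

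The paper fills this gap by invoking \cite[Theorem~6]{PopovPicardGroups}: the underlying variety of a connected affine algebraic group has \emph{finite} divisor class group. Combined with the triviality of the character group of $\GroupF\times\GroupH$, this yields that every $(\GroupF\times\GroupH)$-invariant hypersurface in $\GroupG$ is cut out by an invariant function, after which \cite[Corollary~1.2]{ArzhCelik} applies. So the ``bookkeeping'' you describe is not the main obstacle; the substantive point you are missing is precisely this divisor-class-group verification via Popov's theorem. (Incidentally, the character-group computation $\mathfrak{X}(\GroupF\times\GroupH)\cong\mathfrak{X}(\GroupF)\times\mathfrak{X}(\GroupH)$ does not actually use connectedness of the factors; connectedness enters elsewhere.)
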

\begin{proof}
By~\cite[Theorem~6]{PopovPicardGroups}, the underlying variety of~$\GroupG$ has a finite divisor class group. Additionally,~$\GroupF$ and~$\GroupH$ have trivial character groups, therefore every~$\GroupF \times \GroupH$-invariant hypersurface~$D \subset \GroupG$ is the zero set of an invariant function~$f_D \in {}^{\GroupF} \kk[\GroupG]^{\GroupH}$. It follows from~\cite[Corollary~1.2]{ArzhCelik} that the action~$\GroupF \times \GroupH : \GroupG$ has~$\pi : \GroupG \rightarrow \pi(\GroupG)$ as a constructible quotient.
\end{proof}

\begin{xrem}
One can often give a positive answer to the question on finite generation of~${}^{\GroupF} \kk[\GroupG]^{\GroupH}$. Recall that if~$\GroupR$ is a reductive group,~$Z$ is an affine $\GroupR$-variety and~$\GroupU \subset \GroupR$ is a maximal unipotent subgroup then the algebra~$\kk[Z]^{\GroupU}$ is finitely generated~\cite[Chapter~3.2]{Kraft}. Thus, the constructible space \hbox{$\dcosets{\GroupF}{\GroupG}{\GroupH} $} is guaranteed to exist if both groups~$\GroupF$ and~$\GroupH$ are maximal unipotent subgroups in bigger reductive subgroups~$\GroupF^\prime, \GroupH^\prime \subseteq \GroupG$ or if one of them is semisimple and the other one is a maximal unipotent subgroup in a bigger reductive subgroup. Other results on finite generation of algebras of invariants can be found in~\cite{Grosshans}.
\end{xrem}


\begin{thebibliography}{HD}




\normalsize
\baselineskip=17pt


\bibitem{CampoNeuen}
A. A'Campo-Neuen, J. Hausen:
\emph{Examples and Counterexamples of Existence of Categorical Quotients.}
J.~Algebra 231 (1999), 67--85.

\bibitem{ArzhCelik}
I.~V.~Arzhantsev, D.~Celik, J.~Hausen:
\emph{Factorial algebraic group actions and categorical quotients.}
arXiv:0908.0443v2 [math.AG], 11 pages.

\bibitem{Birula}
A. Bialynicki-Birula:
\emph{Algebraic quotients.}
Invariant Theory and Algebraic Transformation Groups, II, in: Encyclopaedia Math. Sci., vol. 131, Springer-Verlag, Berlin, 2002.

\bibitem{BirulaDC}
A. Bialynicki-Birula:
\emph{Categorical quotients.}
J.~Algebra 239 (2001), 35--55.

\bibitem{Celik}
D. Celik:
\emph{A categorical quotient in the category of dense constructivle subsets.}
Coll. Math. 116 (2009), no. 2, 147--151.

\bibitem{Grosshans}
F.~Grosshans:
\emph{Algebraic Homogeneous Spaces and Invariant Theory}
Lecture Notes in Mathematics 1673, Springer-Verlag, Berlin, Heidelberg, 1997.

\bibitem{Humphreys}
J.~Humphreys:
\emph{Linear algebraic groups.}
Graduate Texts in Mathe\-matics 21, Springer-Verlag, New-York, 1998.

\bibitem{Kraft}
H.~Kraft:
\emph{Geometrische methoden in der Invariantentheorie.}
Aspects of Math., Vieweg, 1984.

\bibitem{LunaClosedOrbits}
D.~Luna:
\emph{Sur les orbites ferm\'ees des groupes alg\'ebriques r\'eductifs.}
Invent. Math. 16 (1972), 1--5.

\bibitem{PopovPicardGroups}
V.\,L.~Popov:
\emph{Picard groups of homogeneous spaces of linear algebraic groups and one-dimensional homogeneous vector bundles.}
Izv. Akad. Nauk SSSR Ser. Mat. 8 (1974), no.~2, 301--327.

\bibitem{VinbergPopov}
V.~L.~Popov, E.~B.~Vinberg:
\emph{Invariant theory.}
Algebraic geometry IV, Encyclopaedia Math. Sci., vol. 55, pp. 123--284, Springer-Verlag, Berlin, 1994.
\end{thebibliography}
\end{document}